%% LaTeX2e  file %%

\documentclass[12pt]{amsart}
\usepackage{amssymb,latexsym}
\usepackage[english]{babel}

\makeatletter
\@namedef{subjclassname@2010}{%
  \textup{2010} Mathematics Subject Classification}
\makeatother

\newtheorem{thm}{Theorem}[section]
\newtheorem{cor}[thm]{Corollary}
\newtheorem{lem}[thm]{Lemma}

\parindent=16pt
\oddsidemargin=0.1cm \evensidemargin=0.1cm \topmargin=-1cm

\textwidth=17cm \textheight=23cm

\begin{document}

\baselineskip=17pt

\title{Metrizable DH-spaces of the first category}

\author{Sergey Medvedev}
\address{South Ural State University, Chelyabinsk, 454080 Russia}
\email{medv@math.susu.ac.ru}

\begin{abstract}
We show that if a separable space $X$ has a meager open subset
containing a copy of the Cantor set $2^{\, \omega}$, then $X$ has
$\frak{c}$ types of countable dense subsets. We suggest a
generalization of the $\lambda$-set for non-separable spaces. Let
$X$ be an $h$-homogeneous $\Lambda$-set. Then $X$ is densely
homogeneous and $X \setminus A$ is homeomorphic to $X$ for every
$\sigma$-discrete subset $A \subset X$.
\end{abstract}

\subjclass[2010]{54H05, 54E52}

\keywords{$h$-homogeneous space, set of first category, dense
homogeneous, countable dense homogeneous, $\Lambda$-space}

\maketitle

All spaces under discussion are metrizable.

We see the rapid growth of the theory of \textsf{CDH}-spaces
lately. For example, K. Kunen, A. Medini and L. Zdomskyy
\cite{KMZ} proved that if a separable metrizable space $X$ is not
\textsf{CB} but has a \textsf{CB} dense subset, then $X$ has
$\frak{c}$ types of countable dense subsets. They obtained a
similar result \cite[Theorem 16]{KMZ} for a non-Baire space with
the perfect set property for open sets.  In fact, it suffices (see
Theorem \ref{t3}) to have a meager open subset containing a copy
of the Cantor set $2^{\, \omega}$.

We introduce the $\Lambda$-sets as a generalization of the
$\lambda$-sets for non-separable spaces and consider its
properties. In particular, we improve (see Theorem \ref{t4}) the
result due to R. Hern\'{a}ndez-Guti\'{e}rrez, M.~Hru\v{s}\'{a}k
and J. van Mill \cite[Proposition 4.9]{HHrM} concerning
\textsf{CDH}-property of $h$-homogeneous $\lambda$-sets.

In the paper we are not dealing with the set-theoretic methods;
only topological methods are applied.

\section{{\textsf{DH}}-spaces}

For all undefined terms and notation see~\cite{Eng}.

$X \approx Y$ means that $X$ and $Y$ are homeomorphic spaces. A
separable topological space $X$ is \textit{countable dense
homogeneous} (briefly, \textsf{CDH}) if, given any two countable
dense subsets $A$ and $B$ of $X$, there is a homeomorphism  $h: X
 \rightarrow X$ such that $h(A)= B$. The \textit{type} of a countable
dense subset $D$ of a separable space $X$ is $\{h(D): h$ is a
homeomorphism of $X \}$. So a separable space is \textsf{CDH} if
and only if it has exactly one type of countable dense subsets.
Also notice that the maximum possible number of types of countable
dense subsets of a separable space is $\frak{c}$.

A space $X$ is \textit{densely homogeneous} (briefly, \textsf{DH})
provided that (1) $X$ has a $\sigma$-discrete dense subset and (2)
if $A$ and $B$ are two $\sigma$-discrete dense subsets of $X$,
then there is a homeomorphism  $h: X \rightarrow X$ such that
$f(A)= B$. One can check that no \textsf{DH}-space without
isolated points can be a $\sigma$-discrete space. Clearly, if $X$
is a separable metrizable space, then $X$ is \textsf{CDH}
$\Leftrightarrow$ $X$ is \textsf{DH}.

A space $X$ is called \textit{of the first category} (or
\textit{meager}) if it can be represented as a countable union of
nowhere dense sets. A space $X$ is \textit{completely Baire}
(briefly, \textsf{CB}) if every closed subspace of $X$ is a Baire
space.

Lemma \ref{le1} was obtained by the author \cite{med86}.
Independently it was proved for separable spaces by B. Fitzpatrick
Jr. and H-X. Zhou \cite{Fitz1}.

\begin{lem}\label{le1}
For a metric space $X$ the following are equivalent:

{\rm{1)}} the space $X$ is of the first category,

{\rm{2)}} $X$ contains a $\sigma$-discrete everywhere dense set
 of type $G_\delta$ without isolated points.
\end{lem}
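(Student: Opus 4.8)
The plan is to prove the two implications separately. The implication $2)\Rightarrow 1)$ is essentially formal, while $1)\Rightarrow 2)$ carries the real content, the obstacle being the simultaneous control of the $G_\delta$ property together with density and the absence of isolated points. At the outset I would record that, under either condition, $X$ has no isolated points: if $x$ were isolated then $\{x\}$ would be open, forcing any dense set to contain $x$ as an isolated point (against $2)$), while a meager space can contain no nonempty open nowhere dense piece (against $1)$).

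For $2)\Rightarrow 1)$, write the dense $\sigma$-discrete $G_\delta$ set without isolated points as $D=\bigcup_n D_n$ with each $D_n$ relatively discrete. Using that $X$ has no isolated points I would show each $\overline{D_n}$ is nowhere dense: if $\overline{D_n}$ contained a nonempty open $U$, pick $x\in D_n\cap U$ and a neighborhood $V\subseteq U$ with $V\cap D_n=\{x\}$; density of $D_n\cap U$ in $U$ would force $\{x\}$ to be dense in $V$, whence $V=\{x\}$ and $x$ is isolated, a contradiction. Hence $D\subseteq\bigcup_n\overline{D_n}$ is meager. Finally, since $D$ is a dense $G_\delta$, its complement $X\setminus D$ is $F_\sigma$ with empty interior, so it is a countable union of closed sets with empty interior, i.e.\ of nowhere dense sets; thus $X\setminus D$ is meager and $X=D\cup(X\setminus D)$ is meager.

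For $1)\Rightarrow 2)$ I would start from a representation $X=\bigcup_n N_n$ with $N_n$ closed nowhere dense and $N_0\subseteq N_1\subseteq\cdots$ (replace an arbitrary meager decomposition by the closures of its partial unions), and set $L_n=N_n\setminus N_{n-1}$, so that the layers $L_n$ partition $X$ and each $L_n=N_n\cap(X\setminus N_{n-1})$ is $F_\sigma$. The guiding idea, modeled on the case $\mathbb{Q}\times 2^{\,\omega}$, is to build $D$ as a disjoint union $\bigcup_n D_n$ with $D_n\subseteq L_n$ and each $D_n$ \emph{closed and discrete in $X$}; then $L_n\setminus D_n=N_n\cap(X\setminus N_{n-1})\cap(X\setminus D_n)$ is $F_\sigma$ (a closed set intersected with open sets), so $X\setminus D=\bigcup_n(L_n\setminus D_n)$ is $F_\sigma$ and $D$ is automatically $G_\delta$. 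To produce the points I would fix a $\sigma$-discrete base $\mathcal{B}=\bigcup_k\mathcal{B}_k$ with each $\mathcal{B}_k$ a discrete family of open sets, and use that, since each $N_m$ is nowhere dense, every nonempty open set meets $L_j$ for arbitrarily large $j$.

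The construction I would carry out by a bookkeeping over countably many tasks indexed by $(k,m)$, the task $(k,m)$ handling $\mathcal{B}_k$ with a threshold $M_{(k,m)}\to\infty$: for every $B\in\mathcal{B}_k$ choose $x_B\in B$ lying in some layer $L_{n(B)}$ with $n(B)\ge M_{(k,m)}$, possible by the previous remark. Selecting one point per member of the discrete family $\mathcal{B}_k$ yields a closed discrete set, and a single layer $L_n$ then receives points only from the finitely many tasks with threshold $\le n$; since a finite union of closed discrete sets is closed discrete, each $D_n=D\cap L_n$ is closed discrete in $X$, as required. Running the tasks through all of $\mathcal{B}$ makes $D$ dense, and it is crowded because $X$ has no isolated points: any neighborhood of a point of $D$ contains a smaller basic set avoiding that point, which again meets $D$. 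I expect the main obstacle to be exactly this coordination: each individual layer must remain closed discrete, to secure the $G_\delta$ property, while the \emph{union} over the layers must be both dense and dense-in-itself, and reconciling these opposite pulls is what forces the layerwise placement together with the divergent thresholds.
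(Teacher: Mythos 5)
The paper never actually proves Lemma \ref{le1}: it is imported from \cite{med86}, with the separable case credited independently to \cite{Fitz1}, so there is no in-text argument to measure you against; your proof has to stand on its own, and it does. The direction 2) $\Rightarrow$ 1) is handled correctly: relative discreteness of each $D_n$ plus the absence of isolated points makes each $\overline{D_n}$ nowhere dense, so $D$ is meager, while $X\setminus D$, being $F_\sigma$ with empty interior, is meager as well. The substantial direction 1) $\Rightarrow$ 2) also works, and its key device is the right one: after passing to an increasing sequence of closed nowhere dense sets $N_n$ and the layers $L_n=N_n\setminus N_{n-1}$, you place $D_n\subseteq L_n$ as sets that are closed and discrete in $X$, so that $X\setminus D=\bigcup_n\bigl(N_n\cap(X\setminus N_{n-1})\cap(X\setminus D_n)\bigr)$ is a countable union of sets of the form closed $\cap$ open $\cap$ open, hence $F_\sigma$ in a metric space; the $G_\delta$ property of $D$ is then automatic rather than something to be fought for. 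Your bookkeeping is also sound: choosing one point per member of a discrete family of open sets yields a closed discrete set, subsets and finite unions of closed discrete sets are closed discrete, and the divergent thresholds guarantee that each layer receives points from only finitely many tasks, while running through all of $\mathcal{B}$ gives density and (using that a meager space has no isolated points) the dense-in-itself property. Two things you should make explicit in a final write-up: the existence of a $\sigma$-discrete base for an arbitrary metric space is the Bing metrization theorem --- this is the one non-elementary ingredient and precisely what lets the argument cover the non-separable case for which the lemma is stated; and your parenthetical reason why condition 1) excludes isolated points is misstated (``a meager space can contain no nonempty open nowhere dense piece'') --- the correct argument is that an isolated point gives an open singleton which must be contained in one of the nowhere dense sets of the decomposition, contradicting nowhere density of that set.
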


Recall that a separable space in which every countable set is a
$G_\delta$-set is called a $\lambda$-\textit{set}. Likewise, a
space in which every $\sigma$-discrete set is a $G_\delta$-set is
called a $\Lambda$-\textit{set}. From Lemma \ref{le1} it follows
that every metrizable $\Lambda$-set without isolated points is of
the first category in itself.

The following statement is similar to \cite[Theorem 3.4]{Fitz1}.

\begin{thm}\label{t1}
Every \textsf{DH}, meager, metrizable space $X$ is a
$\Lambda$-set.
\end{thm}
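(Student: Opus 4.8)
The plan is to show that every $\sigma$-discrete subset $A$ of $X$ is a $G_\delta$-set in $X$, which is exactly the defining property of a $\Lambda$-set. Since $X$ is meager, Lemma~\ref{le1} provides a fixed $\sigma$-discrete everywhere dense set $D$ of type $G_\delta$. I would first record the easy dense case: if $B$ is any $\sigma$-discrete \emph{dense} subset of $X$, then both $B$ and $D$ are $\sigma$-discrete dense sets, so the \textsf{DH}-property yields a homeomorphism $h\colon X\to X$ with $h(B)=D$. As $h^{-1}$ is continuous and $D$ is $G_\delta$, the set $B=h^{-1}(D)$ is a $G_\delta$-set. Thus every $\sigma$-discrete dense subset of $X$ is a $G_\delta$-set in $X$.

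The main step is to pass from dense $\sigma$-discrete sets to arbitrary ones. Given an arbitrary $\sigma$-discrete set $A$, I would enlarge it to $B=A\cup D$. A countable union of discrete families is again $\sigma$-discrete, so $B$ is $\sigma$-discrete, and $B$ is dense because $D$ is; hence $B$ is a $G_\delta$-set in $X$ by the previous paragraph. It then suffices to prove that $A$ is a relative $G_\delta$ in $B$, for then $A=B\cap G$ with $G$ a $G_\delta$-set of $X$, and $A$, being the intersection of two $G_\delta$-sets of $X$, is itself a $G_\delta$-set in $X$.

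The key technical observation is that in a $\sigma$-discrete metrizable space every subset is a relative $G_\delta$. Writing $B=\bigcup_{n}B_n$ as a countable union of sets $B_n$ each of which is discrete and closed in $X$ (and hence closed discrete in $B$), I note that a subset of a set with no accumulation points again has no accumulation points, so each $B_n\setminus A$ is closed in $B$; therefore $B\setminus A=\bigcup_n(B_n\setminus A)$ is an $F_\sigma$-set in $B$, and $A$ is a $G_\delta$-set in $B$. Combining this with the dense case completes the argument. I expect the main obstacle to be precisely this reduction from the dense case, since the \textsf{DH}-property only compares dense $\sigma$-discrete sets; the content of the theorem lies in the sandwiching $A\subseteq B$ together with the fact that a $\sigma$-discrete set is detected by a countable family of closed discrete pieces, and it is in verifying that $B\setminus A$ is $F_\sigma$ in $B$ that the exact meaning of $\sigma$-discreteness is used.
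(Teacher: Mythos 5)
Your proof is correct and follows essentially the same route as the paper: enlarge $A$ by the $\sigma$-discrete dense $G_\delta$-set from Lemma~\ref{le1}, use the \textsf{DH}-property to conclude the enlarged set is $G_\delta$ in $X$, and then observe that $A$ is a relative $G_\delta$ in it. The only difference is cosmetic: you spell out the final step (every subset of a $\sigma$-discrete metric space is a relative $G_\delta$, via the closed-discrete pieces), which the paper asserts without proof.
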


\begin{proof}
Take a $\sigma$-discrete subset $A$ of $X$. By Lemma \ref{le1},
there is a $\sigma$-discrete dense $G_\delta$-set $B \subset X$.
Clearly, $A \cup B$ is a $\sigma$-discrete dense subset of $X$.
Then $A \cup B$ is a $G_\delta$-set in $X$ because $A \cup B =
h(B)$ for some homeomorphism  $h: X \rightarrow X$. Since $A$ is a
$G_\delta$-set in $A \cup B$, $A$ is a $G_\delta$-set in $X$.
\end{proof}

\begin{thm}\label{t2}
If a meager metrizable space $X$ contains a copy of the Cantor set
$2^\omega$, then $X$ is not \textsf{DH}.
\end{thm}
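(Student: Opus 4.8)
The plan is to argue by contradiction and feed the hypothesis into Theorem \ref{t1}. Suppose, to the contrary, that $X$ is \textsf{DH}. Since $X$ is meager and metrizable, Theorem \ref{t1} applies and shows that $X$ is a $\Lambda$-set; that is, every $\sigma$-discrete subset of $X$ is a $G_\delta$-set in $X$. My goal is then to exhibit one $\sigma$-discrete subset of $X$ that is \emph{not} $G_\delta$, which will contradict this conclusion.

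To produce such a set, I would fix a subspace $C\subseteq X$ with $C\approx 2^{\,\omega}$ (guaranteed by hypothesis) and choose a countable dense subset $D=\{d_1,d_2,\dots\}$ of $C$. Any countable set is $\sigma$-discrete, being the union of its singletons, so $D$ is $\sigma$-discrete in $X$. The $\Lambda$-set property therefore forces $D$ to be a $G_\delta$-set in $X$. Since $C$ carries the subspace topology, intersecting the defining open sets with $C$ shows that $D$ is also $G_\delta$ in $C$, hence $G_\delta$ in the copy $2^{\,\omega}$.

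The crux of the argument is then the classical fact that a countable dense subset of $2^{\,\omega}$ cannot be a $G_\delta$-set. I would prove this directly: if $D=\bigcap_n U_n$ with each $U_n$ open in $2^{\,\omega}$, then density of $D$ makes every $U_n$ dense, and since $2^{\,\omega}$ has no isolated points the sets $V_n=U_n\setminus\{d_1,\dots,d_n\}$ remain dense and open. But then $\bigcap_n V_n = D\setminus D = \emptyset$, contradicting the Baire category theorem for the compact metric space $2^{\,\omega}$. This is the step I expect to carry the real content, although it is standard.

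Combining the two halves, the set $D$ is simultaneously forced to be $G_\delta$ (by the $\Lambda$-set property coming from the assumption that $X$ is \textsf{DH}) and forbidden from being $G_\delta$ (by the Baire argument inside $2^{\,\omega}$). This contradiction shows that $X$ cannot be \textsf{DH}. The only points needing mild care are the routine verification that a countable set is $\sigma$-discrete and the passage from $G_\delta$ in $X$ to $G_\delta$ in the subspace $C$; everything else is a direct appeal to Theorem \ref{t1} and the Baire category theorem.
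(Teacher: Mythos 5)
Your proof is correct, and it takes a genuinely different route from the paper's. The paper argues directly rather than by contradiction: using Lemma \ref{le1} it fixes a $\sigma$-discrete dense $G_\delta$-set $B \subset X$, takes a countable dense set $A$ in the Cantor copy $F$ (a copy of the rationals), and shows that the $\sigma$-discrete dense set $A \cup B$ is \emph{not} $G_\delta$ in $X$, because $F \setminus (A \cup B)$ is homeomorphic to the irrationals and hence not $F_\sigma$ in the compact set $F$; since homeomorphisms preserve $G_\delta$-sets, no homeomorphism can carry $B$ onto $A \cup B$, so $X$ fails to be \textsf{DH}. You instead route everything through Theorem \ref{t1}: assuming \textsf{DH}, the space is a $\Lambda$-set, so a single countable dense subset $D$ of the Cantor copy (being $\sigma$-discrete) would be $G_\delta$ in $X$, hence $G_\delta$ in $2^{\,\omega}$, which your Baire category argument rules out. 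This is legitimate (Theorem \ref{t1} precedes this statement and its proof does not depend on it), and the two proofs share the same engine --- Lemma \ref{le1} plus preservation of $G_\delta$-sets under homeomorphisms, the latter packaged inside Theorem \ref{t1} in your version. What your version buys: the ``not $G_\delta$'' step is carried by the elementary Baire argument (which the paper itself invokes in Case 2 of Theorem \ref{t3}), avoiding the identification of $F \setminus (A \cup B)$ with the irrationals, which implicitly needs that $B \cap F$ is countable and a characterization (or at least the non-$\sigma$-compactness) of the irrationals; you also never have to control how $B$ meets the Cantor copy at all. What the paper's version buys: it is self-contained modulo Lemma \ref{le1} and exhibits explicitly the two $\sigma$-discrete dense sets of different type, which is closer in spirit to the constructions used later in Theorem \ref{t3}.
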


\begin{proof}
Let $2^\omega \approx F \subset X$. Clearly, $F$ is a closed
nowhere dense subset of $X$. Take a dense set $A \subset F$ such
that $A$ is homeomorphic to the rationals. By Lemma \ref{le1},
there is a $\sigma$-discrete dense $G_\delta$-set $B \subset X$.
The set $F \setminus (A \cup B)$ is not $F_\sigma$ in $F$ because
it is homeomorphic to the irrationals. Hence, $X \setminus (A \cup
B)$ is not $F_\sigma$ in $X$.  Since $A \cup B$ is not $G_\delta$
in $X$, there is no homeomorphism $f: X \rightarrow X$ with $f(B)=
A \cup B$. Thus, $X$ is not \textsf{DH}.
\end{proof}

\begin{thm}\label{t3}
Suppose a separable metrizable space $X$ has a meager open subset
containing a copy of the Cantor set $2^{\, \omega}$. Then $X$ has
$\frak{c}$ types of countable dense subsets.
\end{thm}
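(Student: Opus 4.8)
The plan is to detect types through the homeomorphism type of the complement: if $h\colon X\to X$ is a homeomorphism with $h(D)=D'$ for countable dense sets $D,D'$, then $h$ restricts to a homeomorphism $X\setminus D\to X\setminus D'$, so $X\setminus D\approx X\setminus D'$. Thus it suffices to produce $\frak{c}$ countable dense sets whose complements are pairwise non-homeomorphic. First I would shrink the given Cantor set to a \emph{nowhere dense} one: writing the meager open set $U\supseteq F_0\approx 2^\omega$ as $U=\bigcup_n N_n$ with each $N_n$ closed and nowhere dense in $X$, and applying the Baire property of the compact space $F_0$ to the cover $F_0=\bigcup_n(F_0\cap N_n)$, some $F_0\cap N_n$ has nonempty interior in $F_0$; inside it I pick a clopen-in-$F_0$ copy $F\approx 2^\omega$, which is then closed and nowhere dense in $X$. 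Next I decompose $F\approx\{p\}\cup\bigsqcup_{n\in\omega}F_n$ into clopen Cantor pieces $F_n\approx 2^\omega$ converging to a single point $p$, fix in each $F_n$ a countable dense set $Q_n$ homeomorphic to $\mathbb{Q}$, and fix a countable set $E$ dense in the open dense set $X\setminus F$ (so $E$ is dense in $X$ and $E\cap F=\emptyset$).

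For each $S\subseteq\omega$ set $C_S=\bigcup_{n\in S}Q_n$ and $D_S=E\cup C_S$; each $D_S$ is countable and dense (as $E$ already is), with $D_S\cap F=C_S$. The coding lives in the complement: $X\setminus D_S$ contains $F\setminus C_S=\{p\}\cup\bigsqcup_{n\in S}(F_n\setminus Q_n)\cup\bigsqcup_{n\notin S}F_n$, and here each coordinate records a topological dichotomy preserved by homeomorphisms. For $n\notin S$ the piece $F_n$ survives as a \emph{compact} Cantor set, whereas for $n\in S$ it becomes $F_n\setminus Q_n\approx\omega^\omega$, which is nowhere locally compact and contains no nonempty compact open subset. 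Thus $S$ is written into $X\setminus D_S$ as the pattern of which pieces accumulating at $p$ remain compact.

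The hard part will be recoverability: since $F$ is nowhere dense, a homeomorphism $h\colon X\setminus D_S\to X\setminus D_{S'}$ need not preserve $F$, and the pieces $F_n$ are not open in $X\setminus D_S$, so I cannot simply read off $S$. I would confront this by characterizing the coded data \emph{intrinsically}. Compactness is a homeomorphism invariant, so $h$ carries compact subsets to compact subsets; the task is to show that the compact Cantor pieces clustering at $p$ constitute a feature of the abstract space $X\setminus D_S$ that $h$ must respect, at least up to a finite error and up to relocating the cluster point $p$ to $h(p)$. Establishing such an intrinsic description of the $F_n$ (for instance as the maximal compact perfect sets sitting, in the prescribed clustering pattern, against the surrounding nowhere-locally-compact part $(X\setminus F)\setminus E$) is the crux; granting it, the sequence of compact-versus-noncompact verdicts along any clustering at $h(p)$ matches that along the clustering at $p$, forcing $S$ and $S'$ to agree off a finite set, i.e.\ $S=^{*}S'$.

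Finally, to turn a mod-finite invariant into $\frak{c}$ distinct types I would index the construction by an almost disjoint family $\{S_\alpha:\alpha<\frak{c}\}$ of infinite subsets of $\omega$ (such a family exists in ZFC, e.g.\ via the branches of $2^{<\omega}$); distinct members satisfy $S_\alpha\neq^{*}S_\beta$, so the spaces $X\setminus D_{S_\alpha}$ are pairwise non-homeomorphic and the sets $D_{S_\alpha}$ fall into pairwise distinct types. Since $\frak{c}$ is also the largest possible number of types of a separable space, $X$ then has exactly $\frak{c}$ types of countable dense subsets. The robustness afforded by the almost disjoint family is deliberate: it means the recoverability step only needs to reconstruct the coding pattern up to finitely many coordinates, which is precisely what makes the nowhere-density obstruction tractable.
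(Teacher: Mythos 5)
Your reduction to complements is sound (if $h(D)=D'$ for a homeomorphism $h$ of $X$, then $X\setminus D\approx X\setminus D'$), and your preliminary steps are fine: shrinking to a nowhere dense Cantor set $F$ via Baire category, the decomposition $F=\{p\}\cup\bigcup_n F_n$, and $F_n\setminus Q_n\approx\omega^\omega$. But the step you flag as the crux is not merely a gap --- the invariant you propose to read off is not a topological invariant at all. All surviving pieces $F_n$ ($n\notin S$) are homeomorphic to one another, and all punctured pieces $F_n\setminus Q_n$ ($n\in S$) are homeomorphic to one another; the label $n$ carries no topological meaning. Even if a homeomorphism $h\colon X\setminus D_S\to X\setminus D_{S'}$ did map the family of pieces clustering at $p$ bijectively onto the family clustering at $h(p)$, the induced map on indices would be an arbitrary permutation of $\omega$ subject only to sending compact pieces to compact pieces. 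All you could conclude is that the two patterns have the same number of compact and of non-compact pieces --- never that $S=^{*}S'$. The almost disjoint family cannot rescue this, because all of its members are infinite and co-infinite, hence produce the identical qualitative pattern ``infinitely many compact, infinitely many non-compact.''

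In fact your construction can collapse to a single type. Take $X=\mathbb{Q}\times 2^{\omega}$, which satisfies the hypothesis (the whole space is a meager open set containing Cantor sets). Write $2^{\omega}=\{p'\}\cup\bigcup_n C_n$ with clopen Cantor sets $C_n$ converging to $p'$, fix homeomorphisms $\theta_n\colon 2^{\omega}\to C_n$ and a countable dense $R_0\subset 2^{\omega}$, and put $R=\bigcup_n\theta_n(R_0)$. Choose $F=\{q_0\}\times 2^{\omega}$, $F_n=\{q_0\}\times C_n$, $Q_n=\{q_0\}\times\theta_n(R_0)$, and $E=(\mathbb{Q}\setminus\{q_0\})\times R$: a legitimate instance of your scheme. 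For any permutation $\sigma$ of $\omega$, the map $g_\sigma$ of $2^{\omega}$ fixing $p'$ and equal to $\theta_{\sigma(n)}\circ\theta_n^{-1}$ on $C_n$ is a homeomorphism with $g_\sigma(R)=R$, so $H_\sigma=\mathrm{id}\times g_\sigma$ is a homeomorphism of $X$ with $H_\sigma(E)=E$ and $H_\sigma(Q_n)=Q_{\sigma(n)}$, whence $H_\sigma(D_S)=D_{\sigma(S)}$. Since any infinite co-infinite $S$ can be carried onto any other by a permutation of $\omega$, all your sets $D_{S_\alpha}$ have the \emph{same} type here. This is exactly the trap the paper's proof avoids: it codes by the Mazurkiewicz--Sierpi\'{n}ski family of $\frak{c}$ pairwise non-homeomorphic countable spaces $E_\alpha$ placed inside a single Cantor set $K$, so the information is the intrinsic homeomorphism type of $D_\alpha\cap K$ rather than a labelling; and it forces every type-preserving homeomorphism of $X$ to fix $K$ setwise by locating $K$ inside canonically defined sets (the largest meager open set $V$, the closed set $Y$ of points all of whose neighborhoods contain Cantor sets, and, in the harder case, a $G_\delta$-versus-non-$G_\delta$ asymmetry between the countable dense sets on the two sides of $K=\mathrm{Fr}(W)$). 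Your scheme would need an analogous label-free coding to have any chance of working.
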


\begin{proof}
The set $V= \bigcup \{U: U$ is a meager open set in $X \}$ is the
largest open subset of $X$ which is meager in itself. One can
check that $Y = \{y \in X:$ every neighborhood of $y$ contains a
copy of the Cantor set $2^{\, \omega} \}$ is a closed subset of
$X$. Under the conditions of the theorem, $V \cap Y \neq
\emptyset$.

\textsf{Case 1}. Let $V \cap \mathrm{Int} ( Y) = \emptyset$. Then
$V \cap Y$ is a non-empty closed nowhere dense subset of $V$.
Choose a countable set $D \subset V$ such that $V \subseteq
\overline{D}$ and $D \cap Y = \emptyset$. If $X \setminus V \neq
\emptyset$, fix a countable set $D^*$ such that $\overline{D^*} =
X \setminus V$; otherwise put $D^*= \emptyset$. Take a copy $K
\subset V \cap Y$ of the Cantor set $2^{\, \omega}$ and consider a
family $\{E_\alpha \subset K: \alpha < \frak{c}\}$ of size
$\frak{c}$ consisting of pairwise non-homeomorphic countable
spaces; such family exists by the theorem due to S. Mazurkiewicz
and W. Sierpi{\'{n}}ski \cite{MzS}. For every $\alpha < \frak{c}$,
define $D_\alpha = E_\alpha \cup D \cup D^*$. Clearly, each
$D_\alpha$ is a countable dense subset of $X$.

We claim that $D_\alpha$ and $D_\beta$ are countable dense subsets
of $X$ of a different type whenever $\alpha \neq \beta$. Assume
that there exists a homeomorphism $h: X \rightarrow X$ such that
$h(D_\alpha) = D_\beta$. One readily sees that $h(V) = V$, $h(X
\setminus V) = X \setminus V$, and $h(V \cap Y) = V \cap Y$.
Hence, $E_\beta = D_\beta \cap V \cap Y = h(D_\alpha \cap V \cap
Y) = h(E_\alpha)$. This contradicts the fact that $E_\alpha$ and
$E_\beta$ are non-homeomorphic.

\textsf{Case 2}. Let $V^* = V \cap \mathrm{Int} ( Y)  \neq
\emptyset$. Now we will use ideas from the proof of \cite[Theorem
4.5]{HrM}. Fix a copy $K \subset V^*$ of the Cantor set $2^{\,
\omega}$. Clearly, $K$ is nowhere dense in $V^*$. Then there
exists an open set $W$ such that $\overline{W} \subset V^*$, $V^*
\setminus \overline{W} \neq \emptyset$, and the boundary
$\mathrm{Fr} (W) = K$. Obviously, $W \cap K = \emptyset$. Using
Lemma \ref{le1}, we can choose a countable dense subset $D \subset
W$ which is a $G_\delta$-set in $X$. Take a countable base $\{V_n:
n \in \omega \}$ for $V^* \setminus \overline{W}$. For every $n
\in \omega$ we may pick a closed nowhere dense set $F_n \subset
V_n$ such that $F_n \approx 2^{\, \omega}$. Without loss of
generality, $F_n \cap F_m = \emptyset$ whenever $n \neq m$. Let
$D_n$ be a countable dense subset of $F_n$. Clearly, each $D_n$ is
homeomorphic to the rationals. Put $D^\prime = \cup \{D_n: n \in
\omega \}$. Note that $D^\prime \cap O$ is not a $G_\delta$-subset
of $X$ for every non-empty open set $O \subseteq V^* \setminus
\overline{W}$. For if this were true, we could pick $n \in \omega$
such that $F_n \subset O$ which would imply that $D^\prime \cap
F_n = D_n$ would be $G_\delta$ in $F_n$. The last contradicts to
the Baire category theorem because the rationals cannot be
homeomorphic to a $G_\delta$-subset of a compact space.

Fix a countable dense subset $D^{\prime \prime}$ of $X \setminus
\overline{V^*}$. Put $D^*=D \cup D^\prime \cup D^{\prime \prime}$.

As above, there exists a family $\{E_\alpha \subset K: \alpha <
\frak{c}\}$ of size $\frak{c}$ consisting of pairwise
non-homeomorphic countable spaces. For every $\alpha < \frak{c}$,
define $D_\alpha = E_\alpha \cup D^*$. Clearly, each $D_\alpha$ is
a countable dense subset of $X$.

We claim that $D_\alpha$ and $D_\beta$ are countable dense subsets
of $X$ of a different type whenever $\alpha \neq \beta$. Assume
that there exists a homeomorphism $h: X \rightarrow X$ such that
$h(D_\alpha) = D_\beta$. One readily sees that $h(V) = V$ and
$h(Y) = Y$. Hence, $h(V^*) = V^*$. Moreover, let us show that
$h(W) \cap (V^* \setminus \overline{W}) = \emptyset$. Assume the
converse. Then there is a non-empty open set $O \subset W$ such
that $h(O) \subset V^* \setminus \overline{W}$. However, this is
impossible because $O \cap D$ is a $G_\delta$-set in $X$ while
$h(O \cap D) = h(O) \cap D^\prime $ is not. Likewise, we obtain $W
\cap h(V^* \setminus \overline{W}) = \emptyset$. Thus, $h(W) = W$
and $h(V^* \setminus \overline{W}) = V^* \setminus \overline{W}$.
Hence, $h(K)=K$ because $V^* = W \cup K \cup (V^* \setminus
\overline{W})$. Then $E_\beta = D_\beta \cap K = h(D_\alpha \cap
K) = h(E_\alpha)$. This contradicts the fact that $E_\alpha$ and
$E_\beta$ are non-homeomorphic.
\end{proof}

\section{$h$-Homogeneous {\textsf{DH}}-spaces}

A space $X$ is called \textit{$h$-homogeneous} if ${\mathrm{Ind}}
X =0$ and every non-empty clopen subset of $X$ is homeomorphic to
$X$. If $\mathcal{U}$ is a family of subsets of a metric space
$(X, \varrho)$, then $\cup \, \mathcal{U} = \cup \{U: U \in
\mathcal{U} \}$ and $\mathrm{mesh} (\mathcal{U})= \sup
\{\mathrm{diam} U: U \in \mathcal{U} \}$ is a \textit{measure} of
$\mathcal{U}$.

\begin{lem}\label{le-res}
Let $X_i$ be a metrizable space with ${\mathrm{Ind}} X_i =0$ and
$F_i$ be a nowhere dense closed subset of $X_i$, where $i \in \{1,
2 \}$. Let $f:F_1\rightarrow F_2$ be a homeomorphism.

Then there exist a $\sigma$-discrete (in $X_i$) cover
$\mathcal{V}_i$ of $X_i \setminus F_i$ by non-empty pairwise
disjoint clopen subsets of $X_i$, $i \in \{1, 2 \}$, and a
bijection $\psi: \mathcal{V}_1 \rightarrow \mathcal{V}_2$ such
that for any subsets $D_1 \subseteq \bigcup \mathcal{V}_1$ and
$D_2 \subseteq \bigcup \mathcal{V}_2$ and any bijection $g:D_1
\rightarrow D_2$ satisfying $g(D_1 \cap V) = D_2 \cap \psi(V)$ for
every $V \in \mathcal{V}_1$, the combination mapping $f
\triangledown g: F_1 \cup D_1 \rightarrow F_2 \cup D_2$ is
continuous at each point of $F_1$ and its inverse $(f
\triangledown g)^{-1}$ is continuous at each point of $F_2$.
\end{lem}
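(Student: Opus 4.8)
The plan is to reduce the lemma to a pair of purely geometric ``matching'' conditions on the triple $(\mathcal{V}_1,\mathcal{V}_2,\psi)$ and then to build such a triple by a simultaneous construction on the two sides coordinated through $f$. Write $\varrho_i$ for the metric on $X_i$ and $B_i(x,r)$ for the open $r$-ball about $x$. Since $D_i\subseteq\bigcup\mathcal{V}_i\subseteq X_i\setminus F_i$, the domains $F_1,D_1$ are disjoint and the ranges $F_2,D_2$ are disjoint, so $f\triangledown g$ is automatically a well-defined bijection of $F_1\cup D_1$ onto $F_2\cup D_2$; moreover only continuity at the points of $F_1$ (and of the inverse at the points of $F_2$) is asked for, so nothing must be checked at points of $D_i$. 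I would aim for the triple to satisfy: (C1) for every $p\in F_1$ and $\varepsilon>0$ there is $\delta>0$ such that every $V\in\mathcal{V}_1$ meeting $B_1(p,\delta)$ has $\psi(V)\subseteq B_2(f(p),\varepsilon)$, together with the symmetric statement (C2) for $\psi^{-1}$ and $f^{-1}$ at points of $F_2$. These give the conclusion at once: if $x_k\to p\in F_1$ with $x_k\in D_1$, then eventually $x_k\in B_1(p,\delta)$, so the piece $V_k\ni x_k$ meets $B_1(p,\delta)$ and $(f\triangledown g)(x_k)=g(x_k)\in\psi(V_k)\subseteq B_2(f(p),\varepsilon)$; points $x_k\in F_1$ are handled by continuity of $f$, and the inverse is symmetric.

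For the scaffolding I use that $\mathrm{Ind}\,X_i=0$, so every open cover of $X_i$ has a $\sigma$-discrete refinement by clopen sets. First produce nested clopen neighbourhoods $N_0^i\supseteq N_1^i\supseteq\cdots$ of $F_i$ with $N_n^i\subseteq\{x:\varrho_i(x,F_i)<2^{-n}\}$ and $\bigcap_n N_n^i=F_i$ (separate the disjoint closed sets $F_i$ and $\{\varrho_i(\cdot,F_i)\ge 2^{-n}\}$ by a clopen set and intersect finitely many). The clopen shells $R_n^i=N_n^i\setminus N_{n+1}^i$ partition $X_i\setminus F_i$ and satisfy $\varrho_i(x,F_i)\in[2^{-n-1},2^{-n})$ on $R_n^i$. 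Next, using $\mathrm{Ind}\,F_i=0$ together with the continuity of $f$ and $f^{-1}$, build a refining sequence of $\sigma$-discrete clopen (in $F_1$) partitions $\mathcal{C}_0,\mathcal{C}_1,\dots$ of $F_1$ whose $f$-images partition $F_2$, with every cell $C\in\mathcal{C}_n$ satisfying $\mathrm{diam}_1 C<2^{-n}$ and $\mathrm{diam}_2 f(C)<2^{-n}$; this is possible because for each $p$ the continuity of $f$ at $p$ yields a clopen neighbourhood of $p$ small on both sides, and such covers admit $\sigma$-discrete clopen refinements. Finally refine each shell $R_n^i$ into a $\sigma$-discrete clopen partition whose members have small diameter and are tagged by the cell of $\mathcal{C}_n$ (resp.\ $f(\mathcal{C}_n)$) nearest to them; each piece $V$ then carries a base point $\beta_i(V)\in F_i$ with $\varrho_i(\beta_i(V),V)$ and $\mathrm{diam}_i V$ both $O(2^{-n})$. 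The resulting families are $\sigma$-discrete in $X_i$, being countable unions over shells of $\sigma$-discrete families.

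It remains to define the bijection $\psi$, and here the main difficulty lies. I would organize both tagged families along the common tree $\mathcal{T}$ whose nodes are the cells $C$ (a level-$n$ cell on side $1$ corresponding to the cell $f(C)$ on side $2$): a piece of $\mathcal{V}_1$ in shell $n$ tagged by $C\in\mathcal{C}_n$ is attached to the node $C$, and dually on side $2$. I then want $\psi$ to pair pieces attached to $C$ with pieces attached to $f(C)$, matched so that their scales are comparable. The obstacle is that the two sides need not present the same number of pieces at a given node: near a point $p$ the space $X_1$ may be far richer than $X_2$ near $f(p)$, so a literal node-by-node count can fail, and a naive bijection may be forced to send a piece close to $p$ to a partner far from $f(p)$, destroying (C1). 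The way around it is to match cofinally rather than level-by-level: because $F_i$ is nowhere dense, the points of $X_i\setminus F_i$ accumulate at every point of $F_i$, so along each descending branch $C_0\supseteq C_1\supseteq\cdots$ (a point of $F_1$) each side contributes pieces in cofinally many shells. Hence the two branch-sets of occupied scales are each of order type $\omega$, and an order isomorphism between them gives a scale-monotone bijection along the branch whose matched pieces shrink to $0$ together; assembling these choices coherently over the $\sigma$-discrete tree, so that each piece is used exactly once, produces the global bijection $\psi$ with $\psi(V)$ always attached to a node close, under $f$, to the node of $V$ and of comparable scale.

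With $\psi$ so chosen, (C1) and (C2) are straightforward estimates. If $V\in\mathcal{V}_1$ meets $B_1(p,\delta)$ then, since $\mathrm{diam}_1 V$ and $\varrho_1(\beta_1(V),V)$ are $O(\delta)$, its base point lies in $B_1(p,O(\delta))$; the tag of $\psi(V)$ then lies within $O(2^{-n})$ of $f(\beta_1(V))$, which by continuity of $f$ at $p$ is within $\varepsilon/2$ of $f(p)$ once $\delta$ is small, while $\mathrm{diam}_2\psi(V)=O(2^{-n})<\varepsilon/2$; hence $\psi(V)\subseteq B_2(f(p),\varepsilon)$. The symmetric computation, using the comparability of scales and continuity of $f^{-1}$, gives (C2), and by the reduction of the first paragraph this proves the lemma. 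The one genuinely delicate point is the coordinated choice of $\psi$ in the third paragraph; everything else is the routine manufacture of clopen partitions afforded by $\mathrm{Ind}\,X_i=0$.
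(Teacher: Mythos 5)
Your first two paragraphs are sound: the reduction of the lemma to conditions (C1)/(C2) is exactly right, and the shells $R_n^i$ and refining cell partitions $\mathcal{C}_n$ of $F_1$ are routine consequences of $\mathrm{Ind}=0$. The genuine gap is the order of construction in your third paragraph: you build $\mathcal{V}_1$ and $\mathcal{V}_2$ \emph{independently} (shells refined into arbitrary small-mesh $\sigma$-discrete clopen partitions, plus tags) and only afterwards seek the bijection $\psi$. With that order no $\psi$ need exist at all. Take $F_1=F_2=C$ the Cantor set and $f=\mathrm{id}$; let $X_1=C\cup M$, where $M$ is the set of midpoints of the removed intervals, so that $X_1\setminus F_1$ is countable and \emph{every} disjoint cover of it by nonempty sets is countable; let $X_2\subseteq C\times T$ consist of $C\times\{*\}$ together with $D\times(T\setminus\{*\})$, where $D\subseteq C$ is countable dense and $T$ is the metric star with center $*$ and isolated points $s^\alpha_n$ ($n\in\omega$, $\alpha<\omega_1$), $d(*,s^\alpha_n)=2^{-n}$, $d(s^\alpha_n,s^\beta_m)=2^{-n}+2^{-m}$ otherwise. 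Both spaces are metrizable with $\mathrm{Ind}=0$ (countable closed sums of zero-dimensional pieces), and both $F_i$ are closed nowhere dense. On the second side the sets $(D\cap I)\times\{s^\alpha_n\}$, with $I$ running over a finite clopen partition of $C$ of mesh $<2^{-n}$, form a $\sigma$-discrete cover of $X_2\setminus F_2$ by clopen pieces of diameter $<2^{-n}$ lying at distance exactly $2^{-n}$ from $F_2$ --- a perfectly legitimate output of your shell-refinement procedure --- and it has $\omega_1$ members, while $\mathcal{V}_1$ has at most $\omega$. So no bijection exists, and your procedure contains no mechanism to exclude such outputs; the needed coordination of local cardinalities at every $p$ and every scale is precisely the content of the lemma, and it is missing. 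The same example refutes the claim your matching rests on: along the branch of $p$ the occupied scales may well have order type $\omega$ on both sides, but a single node can carry $\omega_1$ pieces on one side and one piece on the other, so an order isomorphism of occupied scales induces no bijection of pieces.

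For comparison: the paper itself does not reprove the lemma (it is quoted from [M4] = \emph{Topology Appl.} 159 (2012), with van Engelen's KR-covers as an independent source), and the proofs there evade your difficulty by never separating the construction of the covers from the construction of $\psi$. One fixes a refining sequence $(\mathcal{C}_n)$ of discrete clopen partitions of $F_1$ with $\mathrm{mesh}(\mathcal{C}_n)<2^{-n}$ \emph{and} $\mathrm{mesh}(f(\mathcal{C}_n))<2^{-n}$, expands each cell $C\in\mathcal{C}_n$ to a clopen set $\tilde{C}\subseteq X_1$ with $\tilde{C}\cap F_1=C$ (expansions discrete at each level, nested along the cell tree, of small diameter), does the same for the cells $f(C)$ inside $X_2$, and attaches exactly \emph{one} piece to each node: $P_1(C)=\tilde{C}\setminus\bigcup\{\tilde{C'}:C'\in\mathcal{C}_{n+1},\ C'\subseteq C\}$, and similarly $P_2(f(C))$; every piece is made nonempty by reserving in $\tilde{C}$ a point of $X_1\setminus F_1$ (available since $F_1$ is nowhere dense) which the children's expansions must avoid. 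Then $\psi(P_1(C))=P_2(f(C))$ is a bijection by fiat --- in the example above, all $\omega_1$ points at scale $2^{-n}$ over $f(p)$ fall into the single piece attached to the level-$n$ cell containing $f(p)$ --- and your conditions (C1), (C2) follow from nestedness and the diameter bounds exactly as in your final paragraph. In short: the bijection must be wired into the construction of the covers; it cannot be recovered after the covers have been fixed.
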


Under the conditions of Lemma \ref{le-res}, we say
\cite{Med_Closed} that:

1) the cover $\mathcal{V}_i$ \textit{forms a residual family} with
respect to $F_i$, where $i \in \{1, 2 \}$,

2) the bijection $\psi: \mathcal{V}_1 \rightarrow \mathcal{V}_2$
is \textit{agreed to} $f$.

\medskip

\textsc{Remark.} A construction similar to Lemma \ref{le-res} was
used by B.~Knaster and M.~Reichbach \cite{KnR} for separable
spaces and by A.~V.~Ostrovsky \cite{ost81} and the author
\cite{msu} for non-separable spaces. The last two papers were
written in Russian, therefore the proof of Lemma \ref{le-res} was
also given in \cite{Med_Closed}. Note that like statements are
obtained for arbitrary metrizable spaces in \cite{msu} and
\cite{Med_Closed}.

Independently Lemma \ref{le-res} was obtained by F. van Engelen
\cite{EngInf}. In his notation, the triple $\langle \mathcal{V}_1,
\mathcal{V}_2, \psi \rangle$ is called a \textit{Knaster-Reichbach
cover}, or \textit{KR-cover}, for $\langle X_1 \setminus F_1, X_2
\setminus F_2, f \rangle$. Sometimes this term is more convenient.

\medskip

\begin{thm}\label{t4}
Let $X$ be an $h$-homogeneous metrizable $\Lambda$-set. Then $X$
is \textsf{DH}.
\end{thm}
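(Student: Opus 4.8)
The plan is to verify conditions (1) and (2) in the definition of a \textsf{DH}-space. If $X$ is a single point there is nothing to prove, so assume $X$ is nondegenerate; then $X$ has no isolated points, for a singleton isolated point would be a clopen set and $h$-homogeneity would force $X$ to be that point. Thus $X$ is a crowded $\Lambda$-set, hence meager in itself (as remarked after Lemma~\ref{le1}), and Lemma~\ref{le1} supplies a $\sigma$-discrete dense $G_\delta$-subset; this is condition (1). For condition (2) I fix two $\sigma$-discrete dense subsets $A$ and $B$ and look for a self-homeomorphism $h$ of $X$ with $h(A)=B$. Since $X$ is a $\Lambda$-set, $A$ and $B$ are $G_\delta$, so $X\setminus A=\bigcup_n P_n$ and $X\setminus B=\bigcup_n Q_n$ with $P_n\subseteq P_{n+1}$ and $Q_n\subseteq Q_{n+1}$ closed; as $A$ and $B$ are dense, each $P_n$ and $Q_n$ has empty interior and so is nowhere dense.

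The idea is to realise $h$ as the limit of a matched, ever finer system of clopen cells, with the sets $X\setminus A$ and $X\setminus B$ playing the role of the accumulated Knaster--Reichbach seams. By induction on $n$ I would construct $\sigma$-discrete families $\mathcal{U}_n$ and $\mathcal{W}_n$ of pairwise disjoint nonempty clopen sets, together with bijections $\psi_n:\mathcal{U}_n\to\mathcal{W}_n$, so that: $\mathcal{U}_{n+1}$ refines $\mathcal{U}_n$ compatibly with the $\psi$'s; $\mathrm{mesh}(\mathcal{U}_n)$ and $\mathrm{mesh}(\mathcal{W}_n)$ tend to $0$; the seams $\Phi_n=X\setminus\bigcup\mathcal{U}_n$ and $\Psi_n=X\setminus\bigcup\mathcal{W}_n$ are closed nowhere dense sets satisfying $P_n\subseteq\Phi_n\subseteq X\setminus A$ and $Q_n\subseteq\Psi_n\subseteq X\setminus B$; and an increasing homeomorphism $f_n:\Phi_n\to\Psi_n$ is carried along. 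The inductive step uses Lemma~\ref{le-res}: given the matched seams and $f_n$, it produces residual ($\sigma$-discrete clopen) covers of the complements and a bijection agreed to $f_n$, and---this is where $h$-homogeneity enters---every cell and its partner are nonempty clopen, hence copies of $X$, so the same step can be performed inside each matched pair and the mesh driven down.

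With such a system in hand, the two containments force $\bigcup_n\Phi_n=X\setminus A$ and $\bigcup_n\Psi_n=X\setminus B$, whence $\bigcap_n\bigcup\mathcal{U}_n=A$ and $\bigcap_n\bigcup\mathcal{W}_n=B$. Each $a\in A$ lies in a nested sequence of cells of vanishing diameter whose partners also shrink to a single point $g(a)\in B$; as the trees of cells are matched by the $\psi_n$ and the mesh vanishes on both sides, $g:A\to B$ is a bijection. Setting $h=f\triangledown g$ with $f=\bigcup_n f_n:X\setminus A\to X\setminus B$, continuity of $h$ and of $h^{-1}$ at the seam points is exactly the content of Lemma~\ref{le-res} applied at the stage where a point enters the seam, while continuity at points of $A$ (resp.\ $B$) comes from $\mathrm{mesh}\to0$. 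Hence $h$ is a homeomorphism of $X$ onto itself, and $h(A)=B$ holds automatically because $A$ and $B$ are precisely the ``generic'' limit sets of the two cell systems.

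The main obstacle is the inductive step, where I must enlarge the matched seams to absorb $P_n$ on the left and $Q_n$ on the right while keeping all three desiderata alive at once: the families must stay $\sigma$-discrete so that Lemma~\ref{le-res} applies and the limit map is well defined; the mesh must be driven to $0$ along every branch so that $h$ is a bijection of all of $X$ rather than of a proper dense subset; and the seams must grow, in a matched fashion, to exhaust \emph{exactly} $X\setminus A$ and $X\setminus B$ with no residue. Extending $f_n$ to the newly absorbed nowhere dense pieces is handled by applying Lemma~\ref{le-res} inside the individual cells (each a copy of $X$), but coordinating these local extensions into globally $\sigma$-discrete families that simultaneously meet $P_n$ and $Q_n$---a genuinely two-sided, back-and-forth bookkeeping in the possibly non-separable setting---is the delicate heart of the proof. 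It is here that the $\Lambda$-set hypothesis is indispensable: it is what makes $X\setminus A$ and $X\setminus B$ into $F_\sigma$-unions of nowhere dense closed sets, so that the Knaster--Reichbach machinery of Lemma~\ref{le-res} can be iterated countably often with the leftover limit sets being the prescribed dense sets $A$ and $B$.
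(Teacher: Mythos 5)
Your overall toolkit (KR-covers from Lemma \ref{le-res}, shrinking matched clopen cells, $h$-homogeneity to iterate the construction inside cells) is the right one, but the specific architecture you chose --- seams contained in $X\setminus A$ and $X\setminus B$, with the map on $A$ defined as a limit along branches of cells --- breaks down at the decisive point. You define $g(a)$ as the unique point of $\bigcap_n \psi_n(U_n(a))$, where $U_n(a)$ is the cell containing $a$ at stage $n$. But $X$ is a $\Lambda$-set without isolated points, hence meager in itself, and in particular nowhere completely metrizable: a nested sequence of non-empty clopen sets with diameters tending to $0$ can perfectly well have empty intersection. Nothing in your constraints ($Q_n \subseteq \Psi_n \subseteq X\setminus B$, mesh tending to $0$, matched refinements) prevents the partner cells $\psi_n(U_n(a))$ from shrinking to a ``hole'': the points of $B$ they contain can escape at every later stage, so $g(a)$ need not exist; and even if it exists for every $a$, there is no reason each $b \in B$ is captured by some branch, so bijectivity fails as well. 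This is not a bookkeeping issue inside your scheme; the scheme itself (keep $A$ and $B$ outside the seams forever and recover them in the limit) is the one that works for completely metrizable spaces and is precisely what cannot work here.

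The paper's proof does the opposite: the closed seams absorb everything, including $A$ and $B$. Its conditions (a)--(b) force $F_{n+1} \supseteq F_n \cup X_n \cup A^*_n \cup A_n$, so that $X = \bigcup_n F_n = \bigcup_n E_n$ and every point of $X$ has its image defined at a finite stage; no limit process is ever applied to points of $A$ or $B$. The matching of $A$ with $B$ is explicit: inside each cell $V$ with partner $W$ one chooses discrete sets $A' \subseteq A \cap V$ and $B' \subseteq B \cap W$ of cardinality $\tau$ and an arbitrary bijection $g : A' \rightarrow B'$, which Lemma \ref{le-res} then makes continuous across the seam; condition (c), $f_n(A \cap F_n) = B \cap E_n$, yields $f(A) = B$ at the end. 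A second ingredient you would also need and do not supply: when the left seam absorbs a new nowhere dense piece $Y$ and the right seam a piece $Z$, these two sets need not be homeomorphic, so one cannot simply ``extend $f_n$'' over them; the paper plants a mirror copy $\varphi_U(Z)$ inside the left cell and $\varphi_{U^*}(Y)$ inside the right cell (this is why the homeomorphisms $\varphi_U : X \rightarrow U$ and the enlarged sets $A^*$, $B^*$ are set up at the start) and matches $Y' \cup \varphi_U(Z)$ with $Z' \cup \varphi_{U^*}(Y)$ by swapping the roles of the two pieces. As it stands, your proposal has a genuine gap at its central step.
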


\begin{proof}
Let $w(X) = \tau$. Take two $\sigma$-discrete dense subsets $A$
and $B$ of $X$. From $h$-homogeneity of $X$ it follows that $A$
and $B$ are weight-homogeneous spaces of weight $\tau$. Then $A
\approx B \approx Q(\tau)$ by \cite[Theorem 1]{mq}, where
$Q(\tau)$ is the small $\sigma$-product of countably many discrete
spaces of cardinality $\tau$. Let $A = \cup \{A_n: n \in \omega
\}$, where each $A_n$ is discrete in $X$, and $B = \cup \{B_n: n
\in \omega \}$, where each $B_n$ is discrete in $X$.

Fix a metric $\varrho$ on $X$ which induces the original topology
on $X$. Since ${\mathrm{Ind}} X = \dim X = 0$, by \cite[Theorem
7.3.1]{Eng}, there exists a sequence $\mathcal{U}_{\, 0},
\mathcal{U}_{\, 1}, \ldots$ of discrete clopen covers of $X$ such
that $\mathrm{mesh} (\mathcal{U}_{\, n}) < 1/(n+1)$ and
$\mathcal{U}_{\, n+1}$ is a refinement of $\mathcal{U}_{\, n}$ for
each $n$. Obviously, the family $\mathcal{U} = \{U \in
\mathcal{U}_{\, n}: n \in \omega \}$ forms a base for $X$. For
each $U \in \mathcal{U}$ fix a homeomorphism $\varphi_U: X
\rightarrow U$. Then the sets $ A^* = A \cup \bigcup
\{\varphi_U(A): U \in \mathcal{U} \}$ and $ B^*= B \cup \bigcup
\{\varphi_U(B): U \in \mathcal{U}\}$ are $\sigma$-discrete in $X$.
By definition of a $\Lambda$-set, $A^*$ and $B^*$ are both
$G_\delta$-subsets of $X$. Since $X^*  = X \setminus (A^* \cup
B^*)$ is a dense subset of $X$, it is a meager subset of $X$. Then
$X^*$ can be represented as $\cup \{X_n: n \in \omega \}$, where
each $X_n$ is a nowhere dense closed subset of $X$. By
construction, $\varphi_U(X_n) \cap A = \emptyset$ and
$\varphi_U(X_n) \cap B = \emptyset$ for any $n \in \omega$ and $U
\in \mathcal{U}$.

The difference $X \setminus (A \cup X^*)$ is a $\sigma$-discrete
subset of $X$. Then $X \setminus (A \cup X^*) = \cup \{A^*_n: n
\in \omega \}$, where each non-empty $A^*_n$ is discrete in $X$.
Similarly, $X \setminus (B \cup X^*) = \cup \{B^*_n: n \in \omega
\}$, where each non-empty $B^*_n$ is discrete in $X$.

We will construct the homeomorphism $f: X \rightarrow X$
satisfying $f(A) = B$ by induction on $n$. For each $n \in \omega$
we will find a pair of closed nowhere dense sets $F_n$ and $E_n$,
a homeomorphism $f_n: F_n \rightarrow E_n$, a KR-cover $\langle
\mathcal{V}_n, \mathcal{W}_n, \psi_n \rangle$ for $\langle X
\setminus F_n, X \setminus E_n, f_n \rangle$ such that:

(a) $F_n \cup X_n \cup A^*_n \cup A_n \subset F_{n+1}$,

(b) $E_n \cup X_n \cup B^*_n \cup B_n \subset E_{n+1}$,

(c) $f_n(A \cap F_n) = B \cap E_n$,

(d) the restriction $f_n |_{F_i} = f_i$ for every $i < n$,

(e) $\mathrm{mesh} (\mathcal{V}_n) < 1/(n+1)$ and $\mathrm{mesh}
(\mathcal{W}_n) < 1/(n+1)$,

(f) $\mathcal{V}_{n+1}$ refines $\mathcal{V}_n$ and
$\mathcal{W}_{n+1}$ refines $\mathcal{W}_n$,

(g) if $V \in \mathcal{V}_n$ and $V^* \in \mathcal{V}_m$ for $n <
m$, then $V^* \subset V$ if and only if $\psi_m(V^*) \subset
\psi_n(V)$.

For $n=0$, let $F_0 =E_0= \emptyset$, $\mathcal{V}_0 =
\mathcal{W}_0 = \{X \}$, $\psi_0(X)= X$, and
$f_0(\emptyset)=\emptyset$.

Now let us consider the induction step $n+1$ for some $n \in
\omega$.

First, fix $V \in \mathcal{V}_n$ and $W = \psi_n(V) \in
\mathcal{W}_n$. Let $Y = V \cap X_m$, where $m$ is the least $i$
with $V \cap X_i \neq \emptyset$, and $Z = W \cap X_l$, where $l$
is the least $i$ with $W \cap X_i \neq \emptyset$.

Since $A$ is dense in $X$ and $X$ is nowhere locally compact and
weight-homogeneous, there exists a discrete (in $X$) set $A^\prime
\subset A \cap V$ of cardinality $\tau$ such that $A_{m} \cap V
\subset A^\prime$, where $m$ is the least $i$ with $V \cap A_i
\neq \emptyset$. We can also find a discrete (in $X$) set
$B^\prime \subset B \cap W$ of cardinality $\tau$ such that $B_{l}
\cap W \subset B^\prime$, where $l$ is the least $i$ with $W \cap
B_i \neq \emptyset$. Choose a bijection $g: A^\prime \rightarrow
B^\prime$.

Next, let $A^{* \prime} = V \cap A^*_m$, where $m$ is the least
$i$ with $V \cap A^*_i \neq \emptyset$, and $B^{* \prime} = W \cap
B^*_l$, where $l$ is the least $i$ with $W \cap B^*_i \neq
\emptyset$. Now we have to distinguish some possibilities.
\textsf{Case 1}: $A^{* \prime} \neq \emptyset$ and $B^{* \prime}
\neq \emptyset$. By adding points from $V \cap (X^* \setminus Y)$
or from $W \cap (X^* \setminus Z)$, if it is necessary, we may
assume that $A^{* \prime}$ and $B^{* \prime}$ are both discrete
subsets of $X$ of the same cardinality. \textsf{Case 2}: $A^{*
\prime} \neq \emptyset$ and $B^{* \prime} = \emptyset$. In this
case we replace $B^{* \prime}$ by a discrete subset from $W \cap
(X^* \setminus Z)$ of cardinality $|A^{* \prime}|$. \textsf{Case
3}: $A^{* \prime} = \emptyset$ and $B^{* \prime} \neq \emptyset$
is similar to \textsf{Case 2}. \textsf{Case 4}: $A^{* \prime} =
B^{* \prime} = \emptyset$. For \textsf{Cases 1--3} we can take a
bijection $h: A^{* \prime} \rightarrow B^{* \prime}$. For
\textsf{Case 4} we put $h(\emptyset)= \emptyset$.

The union $Y^{\prime} = Y \cup A^{\prime} \cup A^{* \prime}$ is a
closed nowhere dense subset of $V$. Hence there exists a basic set
$U \in \mathcal{U}$ such that $U \subset V$ and $U \cap Y^{\prime}
= \emptyset$. Then $\varphi_U(Z) \approx Z$ and $\varphi_U(Z)$ is
a closed nowhere dense subset of $V$. Define $F_V = Y^{\prime}
\cup \varphi_U(Z)$. Likewise, there exists $U^* \in \mathcal{U}$
such that $U^* \subset W$ and $\varphi_{U^*}(Y) \approx Y$ misses
the set $Z^{\, \prime} = Z \cup B^{\prime} \cup B^{* \prime}$. Put
$E_W = Z^{\, \prime} \cup \varphi_{U^*}(Y)$.

Then we define a homeomorphism $f_V: F_V \rightarrow E_W$ by the
rule
\[ f_V(x) =\left \{
\begin{array}{cl}
g(x) & \mbox{if  } x \in A^{\prime},  \\
h(x) & \mbox{if  } x \in A^{* \prime}, \\
\varphi_{U^*}(x) & \mbox{if  } x \in Y, \\
(\varphi_U)^{-1}(x) & \mbox{if  } x \in \varphi_U(Z).
\end{array}   \right. \]

By Lemma \ref{le-res}, there exists a KR-cover $\langle
\mathcal{V}_V, \mathcal{W}_V, \psi_V \rangle$ for $\langle V
\setminus F_V, W \setminus E_W, f_V \rangle$.

It is not hard to see that $F_{n+1}  = F_n \cup \bigcup \{F_V: V
\in \mathcal{V}_n \}$ and $E_{n+1} = E_n \cup \bigcup
\{E_{\psi_n(V)}: V \in \mathcal{V}_n \}$ are  closed nowhere dense
subsets of $X$. Clearly, the mapping
\[ f_{n+1}(x) =\left \{
\begin{array}{cl}
f_V(x) & \mbox{if  } x \in F_V \mbox{ for some } V
\in \mathcal{V}_n,  \\
f_n(x) & \mbox{if  } x \in F_n
\end{array}   \right. \]
is a homeomorphism between $F_{n+1}$ and $E_{n+1}$.

Define $\mathcal{V}_{n+1} = \{U \in \mathcal{V}_V: V \in
\mathcal{V}_{n} \}$ and $\mathcal{W}_{n+1} = \{\psi_V(U): U \in
\mathcal{V}_V, V \in \mathcal{V}_{n} \}$. Then $\psi_{n+1}=
\bigcup \{\psi_V: V \in \mathcal{V}_n \}$ is a bijection between
$\mathcal{V}_{n+1}$ and $\mathcal{W}_{n+1}$ which is agreed to
$f_{n+1}$. Thus, $\langle \mathcal{V}_{n+1}, \mathcal{W}_{n+1},
\psi_{n+1} \rangle$ is a KR-cover for $\langle X \setminus
F_{n+1}, X \setminus E_{n+1}, f_{n+1} \rangle$. If necessary, the
families $\mathcal{V}_{n+1}$ and $\mathcal{W}_{n+1}$ can be
refined to families with measure less than $1/(n+2)$. One can
check that all conditions (a)--(g) are satisfied. This completes
the induction step.

From (a) and (b) it follows that $X = \bigcup \{F_n: n \in \omega
\} = \bigcup \{E_n: n \in \omega \}$. The condition (d) implies
that the rule $f(x)= f_n(x)$ if $x \in F_n$ for some $n$ defines
the bijection $f: X \rightarrow X$. Moreover, $f$ is a
homeomorphism. Next, using (c), we obtain $f(A)= B$.
\end{proof}

\begin{cor}
Let $X \subset 2^{\, \omega}$ be an $h$-homogeneous $\lambda$-set.
Then $X$ is \textsf{CDH}.
\end{cor}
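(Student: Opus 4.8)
The plan is to reduce this Corollary to Theorem \ref{t4}. Since $X \subset 2^{\,\omega}$ is separable and metrizable, the equivalence \textsf{CDH} $\Leftrightarrow$ \textsf{DH} recorded in the introduction applies to $X$, so it suffices to prove that $X$ is \textsf{DH}. Theorem \ref{t4} already tells us that every $h$-homogeneous metrizable $\Lambda$-set is \textsf{DH}; as $X$ is assumed $h$-homogeneous and metrizable, the only point to check is that $X$ is a $\Lambda$-set, i.e.\ that the stronger-looking hypothesis ``$\lambda$-set'' supplies this for free in the separable setting.

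The key observation is that for a separable metrizable space the notions of $\sigma$-discrete subset and countable subset coincide. Indeed, such a space is second countable, and in a second countable space every discrete subspace is countable (map each point of the discrete set to a basic open neighborhood isolating it, obtaining an injection into a countable base); hence a $\sigma$-discrete subset, being a countable union of discrete subsets, is itself countable. Conversely, every countable subset is trivially $\sigma$-discrete as a countable union of singletons. Therefore in $X$ the family of $\sigma$-discrete subsets is exactly the family of countable subsets.

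Granting this, the hypothesis that $X$ is a $\lambda$-set, namely that every countable subset of $X$ is a $G_\delta$-set, translates verbatim into the statement that every $\sigma$-discrete subset of $X$ is a $G_\delta$-set, which is precisely the definition of a $\Lambda$-set. Thus $X$ is an $h$-homogeneous metrizable $\Lambda$-set, Theorem \ref{t4} yields that $X$ is \textsf{DH}, and the separable-case equivalence then gives that $X$ is \textsf{CDH}.

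I do not anticipate a genuine obstacle: the entire content is the coincidence of $\sigma$-discrete and countable subsets in a separable space, which is exactly what exhibits $\lambda$-sets as the separable instances of $\Lambda$-sets. The only thing to be careful about is to invoke separability at the right moments, so that both Theorem \ref{t4} and the \textsf{CDH} $\Leftrightarrow$ \textsf{DH} equivalence become available.
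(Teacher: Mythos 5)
Your proposal is correct and is exactly the argument the paper intends: the corollary is stated without proof as an immediate consequence of Theorem \ref{t4}, via the observation that in a separable metrizable space $\sigma$-discrete subsets are precisely the countable ones (so $\lambda$-set $=$ $\Lambda$-set there), together with the \textsf{CDH} $\Leftrightarrow$ \textsf{DH} equivalence for separable metrizable spaces noted in the introduction. Your careful verification of the countability of discrete subspaces in second countable spaces fills in the only detail the paper leaves implicit.
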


\begin{thm}\label{t5}
Let $X$ be an $h$-homogeneous metrizable $\Lambda$-set. If $A$ is
a $\sigma$-discrete subset of $X$, then $X$ is homeomorphic to $X
\setminus A$.
\end{thm}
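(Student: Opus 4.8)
The plan is to reduce Theorem~\ref{t5} to Theorem~\ref{t4} by engineering a homeomorphism of $X$ that carries $X$ onto a dense subset whose complement is exactly $A$, using the fact that $A$, being $\sigma$-discrete in a $\Lambda$-set, is automatically a $G_\delta$-set and hence its complement is $F_\sigma$ and dense. First I would observe that by Lemma~\ref{le1} (applied to $X$, which is meager in itself since it is a $\Lambda$-set without isolated points) there is a $\sigma$-discrete dense $G_\delta$-set $B \subset X$, and I would arrange $A \cap B = \emptyset$ by first discarding the (still $\sigma$-discrete) set $A$ from a given dense $\sigma$-discrete $G_\delta$-skeleton. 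The key point is that $X \setminus A$ remains dense in $X$ (a $\sigma$-discrete set has empty interior in a space with no isolated points, as $X$ is here), so $X \setminus A$ is itself a dense subspace that inherits $h$-homogeneity structure through the clopen base.

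The main construction I would run is essentially the same Knaster--Reichbach back-and-forth as in Theorem~\ref{t4}, but now building a homeomorphism $f \colon X \to X \setminus A$ rather than a self-homeomorphism matching two dense sets. Concretely, I would treat $A$ as the set that must be ``missed'' on the target side: using the sequence of discrete clopen covers $\mathcal{U}_n$ and the fixed homeomorphisms $\varphi_U \colon X \to U$ exactly as before, I would set up two increasing sequences of closed nowhere dense sets $F_n \subset X$ and $E_n \subset X \setminus A$ together with KR-covers, demanding that the $E_n$ exhaust $X \setminus A$ while never capturing a point of $A$, and that the $F_n$ exhaust all of $X$. The discreteness of each layer $A_n$ of $A = \bigcup_n A_n$ lets me, at stage $n$, enlarge the target closed set $E_{n+1}$ so as to cover a prescribed discrete piece of $(X \setminus A)$ while the corresponding source enlargement $F_{n+1}$ covers a prescribed discrete piece of $X$; because $A$ is $G_\delta$, its complement decomposes as $\bigcup_n C_n$ with each $C_n$ closed, and I would feed these $C_n$ into the target-side growth condition in place of the ``$B \cup X^*$'' bookkeeping of Theorem~\ref{t4}.

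The step I expect to be the genuine obstacle is the bookkeeping that simultaneously guarantees (i) $\bigcup_n F_n = X$, (ii) $\bigcup_n E_n = X \setminus A$, and (iii) $\bigcup_n E_n$ omits $A$ entirely; that is, the target must be squeezed to be precisely the dense $F_\sigma$-set $X \setminus A$ and not a single point more or less. In Theorem~\ref{t4} both sides were all of $X$, so exhaustion was symmetric and automatic from conditions (a),(b); here the asymmetry means I must verify that each point of $A$ is permanently excluded from every $E_n$ — this is where I would exploit that $A$ is $G_\delta$ (so $X \setminus A = \bigcup_n C_n$ with $C_n$ closed and nowhere dense, the nowhere-denseness again from no isolated points) to enumerate the closed target pieces that are \emph{allowed} to be absorbed, and that the KR-cover mechanism of Lemma~\ref{le-res} keeps the combination map continuous across these nowhere dense closed sets. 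Once exhaustion and continuity of $f$ and $f^{-1}$ at limit points are checked via conditions (e)--(g) exactly as in Theorem~\ref{t4}, the map $f = \bigcup_n f_n$ is the desired homeomorphism $X \to X \setminus A$.
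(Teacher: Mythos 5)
You have correctly guessed that a Knaster--Reichbach scheme is relevant, but your sketch misses the one idea that makes the theorem work, and it misidentifies the real obstacle. In your back-and-forth between $X$ and $X \setminus A$, the hard part is not ``demanding that the $E_n$ never capture a point of $A$'' --- that is automatic once you declare $E_n \subseteq X \setminus A$. The hard part is the matching step: when you absorb a source piece $P$ (a chunk $V \cap C_n$ of the meager decomposition of $X$, or a chunk $V \cap A_n$ of $A$) into $F_{n+1}$, you must simultaneously exhibit a homeomorphic copy of $P$ that is a closed, nowhere dense subset of the corresponding clopen piece $W \subseteq X \setminus A$ of the target --- in particular a copy missing $A$ entirely. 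Your proposal never says where such copies come from, and the natural candidates $\varphi_U(P)$ are useless as they stand: $\varphi_U$ is an arbitrary homeomorphism of $X$ onto $U$ and can perfectly well send points of $X \setminus A$ into $A$. Knowing only that ``$A$ is $G_\delta$'' (the only use you make of the $\Lambda$-hypothesis) does not produce these copies. This is exactly why the paper does not work with $A$ and the raw decomposition $X \setminus A = \bigcup_n C_n$, as you propose, but first \emph{saturates}: it forms $A^* = A \cup \bigcup\{\varphi_U(A) : U \in \mathcal{U}\}$, which is still $\sigma$-discrete, applies the $\Lambda$-property to $A^*$ (not merely to $A$) to conclude $A^*$ is $G_\delta$, and then decomposes $X^* = X \setminus A^*$ into closed nowhere dense sets $X_n$ whose images $\varphi_U(X_n)$ lie in $U \setminus A$; for the discrete layers it uses weight-homogeneity and nowhere local compactness to find closed copies of each $A_n$ inside $U \setminus A$. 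Moreover, having arranged the decomposition $X = \bigcup_n (A_n \cup X_n)$ with these placement properties, the paper does not run the induction at all: it invokes \cite[Lemma 4]{Med_Closed} to conclude that every non-empty relatively open subset of $X \setminus A$ contains a closed nowhere dense copy of $X$, and then \cite[Theorem 3]{Med_Closed} to get $X \setminus A \approx h(X, w(X)) = X$. So your route differs from the paper's (direct construction versus reduction to those cited results), but as written it has a genuine gap precisely where the $\Lambda$-set hypothesis has to be exploited.

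Two further points. First, your justification of density --- ``a $\sigma$-discrete set has empty interior in a space with no isolated points'' --- is false: $\mathbb{Q}$ is $\sigma$-discrete, has no isolated points, and is open in itself. Density of $X \setminus A$ really needs $X$ to be non-$\sigma$-discrete (note that for $\sigma$-discrete $X$, e.g.\ $X = \mathbb{Q}$ and $A = \mathbb{Q} \setminus \{0\}$, the conclusion of the theorem itself fails, so this hypothesis is implicit throughout); and the nowhere density of the sets $C_n$ comes from the meagerness of $X$ (Lemma~\ref{le1} applied to the $\Lambda$-set $X$), not from the absence of isolated points. Second, your opening framing --- ``reduce Theorem~\ref{t5} to Theorem~\ref{t4}'' --- cannot work even in principle: Theorem~\ref{t4} produces self-homeomorphisms of $X$ carrying one dense $\sigma$-discrete set to another, and no self-homeomorphism of $X$ can map $X$ onto the proper subspace $X \setminus A$.
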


\begin{proof}
Since $\dim X= 0$, there exists a sequence $\mathcal{U}_{\, 0},
\mathcal{U}_{\, 1}, \ldots$ of discrete clopen covers of $X$ such
that $\mathcal{U}_{\, n+1}$ is a refinement of $\mathcal{U}_{\,
n}$ for each $n$ and the family $\mathcal{U} = \{U \in
\mathcal{U}_{\, n}: n \in \omega \}$ forms a base for $X$. For
each $U \in \mathcal{U}$ fix a homeomorphism $\varphi_U: X
\rightarrow U$. Then the set $ A^* = A \cup \bigcup
\{\varphi_U(A): U \in \mathcal{U} \}$ is $\sigma$-discrete in $X$.
Hence, $ A^* = \cup \{A_n: n \in \omega \}$, where each $A_n$ is a
discrete closed subset of $X$. For every $U \in \mathcal{U}$ the
set $U \setminus A$ contains a closed copy of each $A_n$ because
the space $X$ is weight-homogeneous and nowhere locally compact.

By definition of a $\Lambda$-set, $A^*$ is a $G_\delta$-subset of
$X$. Since $X^* = X \setminus A^* $ is a dense subset of $X$, it
is meager in $X$. Then $X^* = \cup \{X_n: n \in \omega \}$, where
each $X_n$ is a nowhere dense closed subset of $X$. By
construction, $\varphi_U(X_n)$ is a closed nowhere dense subset of
$U$ and $\varphi_U(X_n) \subset U \setminus A$ for any $n \in
\omega$ and $U \in \mathcal{U}$.

Since $X= \bigcup \{A_n \cup X_n: n \in \omega \}$, by \cite[Lemma
4]{Med_Closed}, every non-empty relatively open subset of $X
\setminus A$ contains a closed (in $X \setminus A$) nowhere dense
copy of $X$. According to \cite[Theorem 3]{Med_Closed}, $X
\setminus A$ is homeomorphic to $h(X, k)$, where $k =w(X)$ and
$h(X, k)$ is the smallest $h$-homogeneous meager space containing
$X$. Clearly, $X = h(X, k)$.
\end{proof}

\begin{cor}
Let $X \subset 2^{\, \omega}$ be an $h$-homogeneous $\lambda$-set.
If $A$ is a countable subset of $X$, then $X$ is homeomorphic to
$X \setminus A$.
\end{cor}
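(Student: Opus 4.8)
The plan is to deduce this corollary directly from Theorem \ref{t5} by verifying that, in the separable setting, the hypotheses of the two statements coincide. Since $X \subset 2^{\,\omega}$, the space $X$ is separable and metrizable, hence second countable. The first thing I would establish is the elementary fact that in a separable metric space every discrete subset is countable; consequently a subset of $X$ is $\sigma$-discrete if and only if it is countable.

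Granting this, I would argue that the notions of $\lambda$-set and $\Lambda$-set agree for a separable space such as $X$. By definition a $\lambda$-set is a separable space in which every countable subset is a $G_\delta$-set, whereas a $\Lambda$-set is one in which every $\sigma$-discrete subset is a $G_\delta$-set. Because the countable subsets and the $\sigma$-discrete subsets of $X$ are exactly the same collection, $X$ is a $\Lambda$-set precisely because it is a $\lambda$-set. Thus $X$ is an $h$-homogeneous metrizable $\Lambda$-set.

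With this identification in hand, the conclusion is immediate. The given set $A \subset X$ is countable, hence $\sigma$-discrete, so Theorem \ref{t5} applies verbatim and yields that $X$ is homeomorphic to $X \setminus A$, as required.

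The only step demanding genuine attention is the first one, namely the claim that discrete, and therefore $\sigma$-discrete, subsets of a separable metric space are countable. I expect this to be the main (indeed essentially the sole) obstacle, although it is entirely standard: a discrete subspace of a second countable space is itself second countable and discrete, and a second countable discrete space must be countable. Everything beyond this is a routine translation of the definitions together with a single appeal to Theorem \ref{t5}.
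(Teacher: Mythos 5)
Your proposal is correct and matches the paper's intent: the corollary is stated there without proof precisely because, as you argue, in a separable metric space the $\sigma$-discrete subsets are exactly the countable ones, so an $h$-homogeneous $\lambda$-set in $2^{\,\omega}$ is an $h$-homogeneous metrizable $\Lambda$-set and Theorem \ref{t5} applies directly. Your verification of the standard fact about discrete subsets of second countable spaces is exactly the right (and only) point needing attention.
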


\end{document}